\newtheorem{Th}{Theorem}[section]
\newtheorem{Prop}[Th]{Proposition}
\theoremstyle{definition}
\newtheorem{Def}[Th]{Definition}
\newtheorem{Rem}[Th]{Remark}
\newtheorem{Ex}[Th]{Example}
\theoremstyle{plain}
\let\ta\tau 
\renewcommand{\ta}{\scalebox{1.44}{$\tau$}}
\newcommand{\N}{\mathbb N}
\newcommand{\K}{\mathbb K}
\newcommand{\X}{\mathcal X}
\newcommand{\norm}{\mathcal{N}(\X)}
\title{Characterisation of equivalent norms on a linear space using exponential vector space}
\author{Dhruba Prakash Biswas\footnote{Department of Pure Mathematics, University of Calcutta, 35, Ballygunge Circular Road, Kolkata-700019, INDIA, e-mail : dhrubaprakash28@gmail.com}, Priti Sharma\footnote{Bangabasi College, 19, Rajkumar Chakraborty Sarani, Kolkata - 700009, India, e-mail : mspriti23@gmail.com} and Sandip Jana\footnote{Department of Pure Mathematics, University of Calcutta, 35, Ballygunge 
		Circular Road, Kolkata-700019, INDIA, 
		e-mail : sjpm@caluniv.ac.in}} 
\date{}
\begin{document}
	
	\maketitle
	\begin{abstract}
		
		In this paper we have found a necessary and sufficient condition for equivalence of two norms on a linear space using the theory of exponential vector space. Exponential vector space is an ordered algebraic structure which can be considered as an algebraic ordered extension of vector space. This structure is axiomatised on the basis of the intrinsic properties of the hyperspace $\mathscr{C}(\X)$ comprising all nonempty compact subsets of a Hausdorff topological vector space $\X$. Exponential vector space is a conglomeration of a semigroup structure, a scalar multiplication and a compatible partial order. We have shown that the collection of all norms defined on a linear space, together with the constant function zero, forms a topological exponential vector space. Then using the concept of comparing function (a concept defined on a topological exponential vector space) we have proved the aforesaid necessary and sufficient condition; also we have proved using comparing function that in an infinite dimensional linear space there are uncountably many non-equivalent norms.
	\end{abstract}
	
	AMS Classification : 46A99, 46B99, 06F99
	
	Key words :  Equivalence of norms, exponential vector space, comparing function.
	
\section{Introduction}

It is a well-known fact in functional analysis that in a finite dimensional linear space all norms are equivalent. Again, in an infinite dimensional linear space uncountably many non-equivalent norms can be defined. In the present paper we shall prove a necessary and sufficient condition for equivalence of two norms on a linear space using the theory of exponential vector space. Exponential vector space is an ordered algebraic structure which can be considered as an algebraic ordered extension of vector space. This structure is axiomatised on the basis of the intrinsic properties of the hyperspace $\mathscr{C}(\X)$ comprising all nonempty compact subsets of a Hausdorff topological vector space $\X$. Exponential vector space is a conglomeration of a semigroup structure, a scalar multiplication and a compatible partial order. This structure was formulated by S. Ganguly et al. in \cite{C(X)} with the name `quasi-vector space'. Later Priti Sharma et al. study the same space in \cite{evs} with the new nomenclature `exponential vector space' considering various intrinsic properties of the space. Before going to other details let us present first the definition of exponential vector space.
	
	\begin{Def}\cite{evs}
		Let $(X,\leq)$ be a partially ordered set, `$+$' be a binary operation on 
		$X$ [called \emph{addition}] and `$\cdot$'$:K\times X\longrightarrow X$ be another composition [called \emph{scalar multiplication}, $K$ being a field]. If the operations and the partial order satisfy the following axioms then $(X,+,\cdot,\leq)$ is called an \emph{exponential vector space} (in short \emph{evs}) over $K$ [This structure was initiated with the terminology `\textit{quasi-vector spce}' or `\textit{qvs}' in short by S. Ganguly et al. in \cite{C(X)} ].
		
		$A_1:$ $(X,+)\text{ is a commutative semigroup with identity } \theta$

		$A_2:$ $x\leq y\, (x,y\in X)\Rightarrow x+z\leq y+z  \text{ and } \alpha\cdot 
		x\leq
		\alpha\cdot y, \forall\, z\in X,  \forall\, \alpha\in K$

		$A_3:$ (i) $\alpha\cdot(x+y)=\alpha\cdot x+\alpha\cdot y$

		\hspace*{0.80cm}(ii)  $\alpha\cdot(\beta\cdot x)=(\alpha\beta)\cdot x$

		\hspace*{0.80cm}(iii) $(\alpha+\beta)\cdot x \leq \alpha\cdot x+\beta \cdot x$

		\hspace*{0.80cm}(iv) $1\cdot x=x,\text{ where `1' is the multiplicative identity in }K$,

		\hspace*{0.30cm} $\forall\,x,y\in X,\  \forall\,\alpha,\beta\in K$

		$A_4:$ $\alpha\cdot x=\theta \text{ iff }\alpha=0\text{ or }x=\theta$

		$A_5:$ $x+(-1)\cdot x=\theta\text{ iff } x\in X_0:=\big\{z\in X:\ y\not\leq z, \forall\,y \in X\smallsetminus\{z\}\big \}$

		$A_6:$ For each $x \in X, \exists\,p\in X_0\text{ such that }p\leq x$. \label{d:evs}
	\end{Def}
	
	In the above definition, $X_0$ is precisely the set of all minimal elements of the evs $X$ with respect to the partial order on $X$ and it forms the maximum vector space (within $X$) over the same field as that of $X$ (\cite{C(X)}). This vector space $X_0$ is called the `\emph{primitive space}' or `\emph{zero space}' of $X$ and the elements of $X_0$ are called the `\emph{primitive elements}' [\cite{evs}]. 
	
	Thus every evs contains a vector space and conversely, given any vector space $V$, an evs $X$ can be constructed such that $V$ is isomorphic to $X_0$ \cite{evs}. In this sense,``exponential vector space'' can be considered as an algebraic ordered extension of vector space. The axiom $ A_3 $(iii) expresses very rapid growth of the non-primitive elements, since $x\leq \frac {1}{2} x +\frac{1}{2}x$, $\forall x \not \in \thinspace X_0$; whereas axiom $ A_6 $ demonstrates `\textit{positivity}' of all elements with respect to primitive elements. This justifies the nomenclature `exponential vector space'.
	
	\begin{Def}{\cite{Nach}}
		Let `$\leq$' be a preorder in a topological space $Z$; the preorder is said to be \textit{closed} if its graph $G_{\leq}(Z):=\{(x,y)\in Z\times Z: x\leq y\}$ is closed in $Z\times Z$ (endowed with the product topology).
	\end{Def}
	
	\begin{Th}{\em{\cite{Nach}}}
		A partial order `$\leq$' in a topological space $Z$ will be a closed order iff for any $x,y\in Z$ with $x\not\leq y$, $\exists$ open neighbourhoods $U,V$ of $x,y$ respectively in $Z$ such that $(\uparrow U)\cap(\downarrow V)=\emptyset$, where $\uparrow U:=\{x\in Z: x\geq u\text{ for some u }\in U\}$ and $\downarrow V:=\{x\in Z: x\leq v\text{ for some }v\in V\}$.\label{t:partcl}
	\end{Th}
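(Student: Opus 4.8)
The plan is to prove both implications directly from the definition of a closed graph, using only reflexivity and transitivity of the order `$\leq$' as the order-theoretic inputs; no further structure of $Z$ beyond the product topology is needed.

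For the forward direction, I would assume $G_{\leq}(Z)$ is closed and take $x,y\in Z$ with $x\not\leq y$, so that $(x,y)$ lies in the open complement $Z\times Z\smallsetminus G_{\leq}(Z)$. By definition of the product topology there exist basic open neighbourhoods $U$ of $x$ and $V$ of $y$ with $(U\times V)\cap G_{\leq}(Z)=\emptyset$, i.e. $u\not\leq v$ for every $u\in U$ and every $v\in V$. I would then show $(\uparrow U)\cap(\downarrow V)=\emptyset$ by contradiction: a common point $z$ would satisfy $u\leq z$ for some $u\in U$ (since $z\in\uparrow U$) and $z\leq v$ for some $v\in V$ (since $z\in\downarrow V$), whence $u\leq v$ by transitivity, contradicting the choice of $U$ and $V$. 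This yields the required separating pair.

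For the converse, I would assume the separation condition and prove that the complement of $G_{\leq}(Z)$ is open. Given $(x,y)$ with $x\not\leq y$, choose open neighbourhoods $U,V$ of $x,y$ with $(\uparrow U)\cap(\downarrow V)=\emptyset$. I claim $U\times V$ misses the graph: if some $(u,v)\in U\times V$ satisfied $u\leq v$, then reflexivity gives $u\in\uparrow U$ (as $u\leq u$, $u\in U$), while $u\leq v$ with $v\in V$ gives $u\in\downarrow V$, so $u$ would lie in the assumed-empty intersection — a contradiction. Hence $U\times V$ is an open neighbourhood of $(x,y)$ contained in the complement of $G_{\leq}(Z)$, and since $(x,y)$ was arbitrary the complement is open, so $G_{\leq}(Z)$ is closed.

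The argument is short and essentially symmetric, so the only point demanding care is keeping the two order-theoretic ingredients correctly assigned: \emph{transitivity} is what powers the forward implication (chaining $u\leq z\leq v$), whereas \emph{reflexivity} is what powers the converse (placing $u$ simultaneously in $\uparrow U$ and $\downarrow V$). I expect no genuine obstacle beyond faithfully unwinding the definitions of $\uparrow U$ and $\downarrow V$ and invoking these two properties at the right step.
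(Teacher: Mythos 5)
Your proof is correct: both directions are complete, and you correctly isolate the two order-theoretic inputs — transitivity powering the forward implication (chaining $u\leq z\leq v$) and reflexivity powering the converse (placing $u$ in both $\uparrow U$ and $\downarrow V$). Note that the paper itself gives no proof of this statement — it is quoted from Nachbin's \emph{Topology and Order} — so there is nothing to compare against; your argument is the standard definition-unwinding one, which in effect observes that for open sets $U,V$ the conditions $(U\times V)\cap G_{\leq}(Z)=\emptyset$ and $(\uparrow U)\cap(\downarrow V)=\emptyset$ are equivalent, reducing the theorem to the openness of the complement of the graph.
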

	
	\begin{Def}\cite{evs}
		An exponential vector space $X$ over the field $\K$ of real or complex numbers is said to be a \textit{topological exponential vector space} if there exists a topology on $X$ with respect to which the addition, scalar multiplication are continuous and the partial order `$\leq$' becomes closed (Here $\K$ is equipped with the usual topology).
	\end{Def}
	\begin{Rem}
		If $X$ is a topological exponential vector space then its primitive space $X_0$ becomes a topological vector space, since restriction of a continuous function is continuous. Moreover, the closedness of the partial order `$\leq$' in a topological exponential vector space $X$ readily implies (in view of Theorem \ref{t:partcl}) that $X$ is Hausdorff and hence $X_0$ becomes a Hausdorff topological vector space.\label{rm:topvec}
	\end{Rem}

	\begin{Ex} \cite{mor}
		Let $X:=$ [$0,\infty$)$\times V$, where $V$ is a vector space over the field $\mathbb K$ of real or complex numbers. Define operations and partial order on $X$ as follows :
		for $(r,a),(s,b) \in X $ and $\alpha \in \mathbb K $,\\
		(i) $(r,a)+(s,b) := (r+s,a+b)$\\
		(ii) $\alpha (r,a) := (\lvert\alpha\rvert
		r,\alpha a)$\\
		(iii) $(r,a)\leq (s,b)$ iff $r\leq s$ and $a = b$\\
		Then [$0,\infty$)$\times V$ becomes an exponential vector space with the primitive space \{$0$\}$\times V$ which is clearly isomorphic to $V$.
		
		In this example, if we consider $V$ as a Hausdorff topological vector space, then [$0,\infty$)$\times V$ becomes a topological exponential vector space with respect to the product topology, where [$0,\infty$) is equipped with the subspace topology inherited from the real line $\mathbb R$.
		
		Instead of $V$, if we take the trivial vector space $ \{\theta\} $ in the above example, then the resulting topological evs is [$0,\infty$)$\times\{ \theta\}$ which can be clearly identified with the half ray [$0,\infty$) of the real line. Thus, [$0,\infty$) forms a topological evs over the field $\mathbb K$.\qed\label{e:vect}
	\end{Ex}
	
	\begin{Ex}{\cite{C(X)}}
		Let $\mathscr{C}(\X)$ denote the topological hyperspace consisting of all non-empty compact subsets of a Hausdorff topological vector space $\X$ over the field $\K$ of real or complex numbers. Define addition, scalar multiplication and partial order on $\mathscr{C}(\X)$ as follows:
		
		(i) For $A,B\in\mathscr{C}(\X)$, $A+B:=$ $\big\{a+b:$ $a\in A$, $b\in B\big\}$
		
		(ii) For $A\in\mathscr{C}(\X)$ and $\alpha\in\K$, $\alpha A:=$ $\big\{\alpha a:$ $a\in A\big\}$
		
		(iii) For $A,B\in\mathscr{C}(\X)$, $A\leq B\iff$ $A\subseteq B$
		
		Then $\mathscr{C}(\X)$ becomes an evs over the field $\K$. The primitive space is given by  $[\mathscr{C}(\X)]_{0}$ $=\big\{\{x\}:$ $x\in \X\big\}$.	Moreover, $\mathscr{C}(\X)$ forms a topological evs with respect to the Vietoris topology \cite{Mi}. An arbitrary basic open set in this topology is of the form $V_0^{+}\cap V_{1}^{-}\cap V_{2}^{-}\cap...\cap V_{m}^{-}$, where  $V_0,V_1,...,V_m$ are open in $\X$ with $V_i\subseteq V_0$ for all $i=1,2,...,m$. Here $ V^+:=\big\{A\in\mathscr{C}(\X):A\subseteq V\big\} $, $ V^-:=\big\{A\in\mathscr{C}(\X):A\cap V\neq\emptyset\big\} $, for any $ V\subseteq\X $.\qed\label{ex:vietor}
	\end{Ex}
	
	The study of exponential vector space (previously, quasi-vector space) was motivated by the inspection of properties of $\mathscr{C}(\X)$. It leaves a major influence in searching certain algebraic systems possessing the evs structure and developing general and specialized theory preserving the sense of hyperspace on exponential vector spaces.
	
	In the present paper we first show that the collection $\norm$ of all norms on a linear space $\X$ over the field $\K$, together with the constant function zero `$O$', forms a topological exponential vector space with respect to suitably defined operations, partial order and point open topology. Then using the concept of comparing function (defined on a topological exponential vector space in \cite{Pri}) we have proved some necessary and sufficient conditions of equivalence of two norms on any linear space. Also using the concept of comparing function we have shown that in an infinite dimensional linear space there are uncountably many non-equivalent norms (a standard result in functional analysis).
	
\section{Prerequisites}

	\begin{Def}\cite{mor}
		A map $\phi:X\to Y$ ($X,Y$ being two exponential vector spaces over the field $K$) is called an \textit{order-morphism} if
		
		(i) $f(x+y)=f(x)+f(y)$, $\forall\, x,y\in X$
		
		(ii) $f(\alpha x)=\alpha f(x)$, $\forall\alpha\in K$, $\forall\, x\in X$
		
		(iii) $x\leq y\ (x,y\in X)$ $\implies f(x)\leq f(y)$
		
		(iv) $p\leq q\ \big(p,q\in f(X)\big)$ $\implies f^{-1}(p)\subseteq\downarrow f^{-1}(q)$ and $f^{-1}(q)\subseteq\uparrow f^{-1}(p)$.
		\label{d:mor}\end{Def}
	
	Further, if $f$ is bijective (injective, surjective) order-morphism, then it is called \textit{order-isomorphism} (\textit{order-monomorphism}, \textit{order-epimorphism} respectively).
	
	If $X$ and $Y$ both are topological evs over the field $\K$, then the order-isomorphism $f:X\to Y$ is called \textit{topological order-isomorphism} if $f$ is a homeomorphism.
	
	\begin{Def}\cite{evs}
		A property of an evs is called \textit{evs property} if it remains invariant under order-isomorphism.
	\end{Def}
	
	\begin{Def}{\cite{JTh}}
		In an evs $X$, the \textit{primitive} of $x\in X$ is defined as the set\\
		\centerline{$P_{x}:=$ $\big\{p\in X_{0}:$ $p\leq x\big\}$}
	\end{Def}
	The axiom $A_6$ of the Definition \ref{d:evs} of an evs ascertains that $P_{x}\neq\emptyset$ for each $x\in X$. The elements of $P_{x}$ are known as \textit{primitive elements} of $x$.
	
	\begin{Def}{\cite{spri}}
		An evs $X$ is said to be a \textit{single primitive evs} if $P_{x}$ is a singleton set for each $x\in X$.
		\end{Def}
		
	\begin{Def}\cite{JTh}
		An evs $X$ is said to be a \textit{zero primitive} evs if $P_{x}=\{\theta\}$, for all $x\in X$.
		
		Clearly an evs $X$ is zero primitive iff $X_0=\{\theta\}$.
		Obviously, any zero primitive evs is a single primitive evs.
	\end{Def}
	
\begin{Def}\cite{JTh}
	 An element $x$ in an evs $X$ is said to be \textit{homogeneous} if $\alpha x =|\alpha|x$,
	$\forall\,\alpha\in\K$. An evs $X$ is said to be \textit{homogeneous} if each element of $X$ is homogeneous.
\end{Def}
	
\begin{Def}\cite{JTh}
	Let $X$ be an evs over the field $\K$ of real or complex numbers. An element
	$x\in X$ is said to be a \textit{convex element} if $(\alpha + \beta)x = \alpha x + \beta x, \forall\,\alpha,\beta\in\K$ with $\alpha,\beta\geq 0$. So each primitive element of $X$ is a convex
	element.
	
	An evs $X$ is said to be a \textit{convex} evs if every element of $X$ is convex.
\end{Def}	

\begin{Def}\cite{bal}
 Let $X$ be an evs over the field $\K$ of real or complex numbers. An element
$x\in X$ is said to be a \textit{balanced element} if $\alpha x \leq x,\forall\,\alpha\in\K$ with $|\alpha|\leq1$.

An evs $X$ is said to be \textit{balanced} if each element of $X$ is balanced.
\end{Def}

Single primitivity, convexity, homogeneity and balancedness are evs properties (\cite{spri},\cite{JTh},\cite{bal}).

\section{Evs structure on the collection of norms on a linear space}

Let $\X$ be a linear space over the field $\K$ of real or complex numbers and $\norm$ denote the set of all norms on $\X$ together with the constant function zero, say $O$, on $X$. We define addition, scalar multiplication and partial order on $\norm$ as follows :

(i) For $f,g\in\mathcal{N}(\X)$, $(f+g)(x):= f(x) + g(x)$ for all $x\in\X$.

(ii) For $f\in\mathcal{N}(\X)$ and for all $\alpha\in\K$, $(\alpha f)(x):= |\alpha| f(x)$ for all $x\in\X$.

(iii) For $f,g\in\mathcal{N}(\X)$, $f\leq g\iff f(x)\leq g(x)$ for all $x\in\X$. \\
We show below that  $\big(\mathcal{N}(\X),+,\cdot,\leq\big)$ becomes an evs over the field $\K$.

\begin{Th}
	$\norm$ is an exponential vector space over the field $\K$ of all real or complex numbers with respect to the aforesaid operations and partial order.
\label{t:norm}\end{Th}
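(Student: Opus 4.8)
The plan is to verify the six axioms $A_1$ through $A_6$ of Definition \ref{d:evs} directly, after first confirming that the three operations are well-defined on $\norm$. This preliminary closure check is the only place where the defining properties of a norm are genuinely used: for $f,g\in\norm$ the pointwise sum satisfies $(f+g)(x)=0\iff f(x)=g(x)=0\iff x=0$, is absolutely homogeneous because each summand is, and obeys the triangle inequality summand-wise, so $f+g$ is again a norm (the degenerate cases involving $O$ being immediate). Similarly, for $\alpha\neq 0$ the function $\alpha f$, with value $|\alpha|f(x)$, is a positive multiple of a norm and hence a norm, while $0\cdot f=O$. Thus $+$ and $\cdot$ map back into $\norm$.

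For $A_1$ I would observe that commutativity and associativity of $+$ are inherited pointwise from $\R$, and that the constant function $O$ is the additive identity since $(O+f)(x)=f(x)$ for all $x$; this identifies $\theta=O$. Axiom $A_2$ is immediate from the definition of the order: if $f(x)\le g(x)$ for all $x$ then adding $h(x)$ preserves the inequality, and multiplying through by the nonnegative scalar $|\alpha|$ does too. The identities $A_3$(i),(ii),(iv) reduce to the elementary facts $|\alpha|(f(x)+g(x))=|\alpha|f(x)+|\alpha|g(x)$, $|\alpha|\,|\beta|=|\alpha\beta|$ and $|1|=1$.

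The one relation that is an inequality rather than an equality, and the conceptual heart of the argument, is $A_3$(iii): since $\big((\alpha+\beta)f\big)(x)=|\alpha+\beta|\,f(x)$ while $(\alpha f+\beta f)(x)=(|\alpha|+|\beta|)f(x)$, the triangle inequality $|\alpha+\beta|\le|\alpha|+|\beta|$ together with $f(x)\ge 0$ yields $(\alpha+\beta)f\le\alpha f+\beta f$, with strict inequality in general (e.g.\ $\alpha=1,\beta=-1$ gives $O$ on the left and $f+f$ on the right). This is exactly what prevents $\norm$ from collapsing to a vector space and marks it as a genuine evs.

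It remains to treat $A_4$, $A_5$, $A_6$, all of which hinge on identifying the primitive space. For $A_4$, the equation $|\alpha|f(x)=0$ for all $x$ forces $\alpha=0$ or $f=O$. I would then show $X_0=\{O\}$: because $O\le f$ for every $f\in\norm$, the function $O$ is the least element, so no $f\neq O$ can be minimal, whence $O$ is the unique minimal element; consequently $\norm$ is zero primitive and $A_6$ holds trivially with $p=O$. Finally, since $(-1)\cdot f=|-1|f=f$, we have $f+(-1)\cdot f=f+f$, whose value at $x$ is $2f(x)$, and this equals $O$ precisely when $f=O$, that is, precisely when $f\in X_0$; this verifies $A_5$ and completes the argument. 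I expect no serious obstacle here, as the proof is essentially a routine axiom check; the only points requiring care are the closure of the sum under the norm axioms and the correct identification of $X_0$ as $\{O\}$.
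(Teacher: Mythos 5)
Your proposal is correct and follows essentially the same route as the paper: a closure check for the pointwise sum and scalar multiple (using that at least one summand is a genuine norm for positive definiteness), followed by a direct verification of $A_1$--$A_6$, with the identification $[\mathcal{N}(\X)]_0=\{O\}$ (since $O$ is the least element under the pointwise order) doing the work for $A_5$ and $A_6$. Your added observation that $A_3$(iii) is a strict inequality in general is a nice touch but does not change the argument.
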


\begin{proof}
We first show that if $f, g\in\norm$ with atleast one of $f,g$ being different from the zero function $O$, then $f+g$ is also a norm on $\X$ [Obviously, $O+O=O$].
\\Clearly, $(f+g)(x)=f(x)+g(x)\geq0$ and $(f+g)(x)=0 \Longleftrightarrow f(x)+g(x)=0 \Longleftrightarrow f(x)=0$ and $g(x)=0$ $\Longleftrightarrow$ $x=\theta$ (the zero element of $\X$) [ $\because$ atleast one of $f$ and $g$ is non-zero ].
\\ Again $(f+g)(x+y)=f(x+y)+g(x+y)\leq f(x)+f(y)+g(x)+g(y)=(f+g)(x)+(f+g)(y)$, $\forall x, y\in\X$.
\\ Now $(f+g)(\lambda x)=f(\lambda x)+g(\lambda x)=|\lambda| f(x)+|\lambda| g(x)=|\lambda|\big(f(x)+g(x)\big)=|\lambda|(f+g)(x)$, $\forall x\in\X$, $\forall \lambda\in\K$.
\\ Hence $f+g\in\norm$, $\forall f,g\in\norm$.

Now we show that for any $\alpha\in\K$ and $f\in\norm$, the scalar multiplication $\alpha f\in\norm$.
Clearly, $(\alpha f)(x)=|\alpha| f(x)\geq0$, $\forall x\in\X$. If $\alpha=0$ or $f=O$ then $\alpha f=O\in\norm$. For $\alpha\neq0$ and $f\neq O$, $(\alpha f)(x)=0\Longleftrightarrow|\alpha| f(x)=0\Longleftrightarrow f(x)=0\Longleftrightarrow x=\theta$.\\
 Now $(\alpha f)(x+y)=|\alpha| f(x+y)\leq|\alpha|\big(f(x)+f(y)\big)=(\alpha f)(x)+(\alpha f)(y)$, $\forall x, y\in\X$.
\\ Again $(\alpha f)(\lambda x)=|\alpha| f(\lambda x)=|\alpha||\lambda| f(x)=|\lambda|(\alpha f)(x)$, $\forall x\in\X$, $\forall \lambda\in\K$.
\\ Hence $\alpha f\in \norm$, $\forall f\in\norm$, $\forall \alpha\in\K$.

 Clearly, `$\leq$' is a partial order on $\norm$.

We now prove that $\big(\norm, +, \cdot, \leq\big)$ forms an evs over the field $\K$ of real or complex numbers.
\\$\bf{A_1:}$ Clearly, $(\norm, +)$ is a commutative semigroup with identity $O$, the constant function zero.
\\$\bf{A_2:}$ Let $f,g\in\norm$ with $f\leq g$. Then $f(x)\leq g(x), \forall x\in\X$. For any $h\in\norm$, $(f+h)(x)=f(x)+h(x)\leq g(x)+h(x)=(g+h)(x), \forall x\in\X$ $\Longrightarrow$ $f+h\leq g+h$, $\forall\,h\in\norm$.
\\Also, for any $\alpha\in\K$, $(\alpha f)(x)=|\alpha| f(x)\leq |\alpha| g(x)=(\alpha g)(x), \forall x\in\X$ $\Longrightarrow$ $\alpha f\leq \alpha g$, $\forall \alpha\in\K$.
\\$\bf{A_3\ (i):}$ Let $f,g\in\norm$ and $\alpha\in\K$. Then, $\big(\alpha(f+g)\big)(x)$ = $|\alpha|\big(f(x)+g(x)\big)$ = $|\alpha| f(x)+|\alpha| g(x)=(\alpha f+\alpha g)(x), \forall x\in\X$ $\Longrightarrow$ $\alpha(f+g)=\alpha f+\alpha g$, $\forall \alpha\in\K$, $\forall f,g\in\norm$.
\\$\bf{(ii)}$ Let $f\in\norm$ and $\alpha, \beta\in\K$. Then, $\big(\alpha(\beta f)\big)(x)=|\alpha|(\beta f)(x)=|\alpha||\beta| f(x)$ = $|\alpha\beta| f(x)=\big((\alpha\beta)f\big)(x), \forall x\in\X$ $\Longrightarrow$ $\alpha(\beta f)=(\alpha\beta)f$, $\forall \alpha,\beta\in\K$, $\forall f\in\norm$.
\\$\bf{(iii)}$ Let $f\in\norm$ and $\alpha, \beta\in\K$. Then, $(\alpha+\beta)f(x)$ = $|\alpha+\beta| f(x)$ $\leq$ $(|\alpha|+|\beta|)f(x)$ = $|\alpha| f(x)+|\beta| f(x)$ = $(\alpha f+\beta f)(x)$, $\forall x\in\X$ $\Longrightarrow$ $(\alpha+\beta)f\leq \alpha f+\beta f$, $\forall \alpha,\beta\in\K$, $\forall f\in\norm$.
\\$\bf{(iv)}$ Clearly, $1.f=f$, $\forall f\in\norm$.
\\$\bf{A_4:}$ Let $\alpha\in\K$ and $f\in\norm$. Then, $\alpha f=O$ $\Longleftrightarrow$ $(\alpha f)(x)=O(x), \forall x\in\X$ $\Longleftrightarrow$ $|\alpha| f(x)=0, \forall x\in\X$ $\Longleftrightarrow$  $\alpha=0$ or $f(x)=0, \forall x\in\X$ $\Longleftrightarrow$  $\alpha=0$ or $f=O$.
\\$\bf{A_5:}$ For any $f\in\norm$, $f+(-1)f=O$ $\Longleftrightarrow$ $\big(f+(-1)f\big)(x)=O(x), \forall x\in\X$ $\Longleftrightarrow$ $f(x)+f(x)=0, \forall x\in\X$ $\Longleftrightarrow$ $f(x)=0, \forall x\in\X$ i.e. $f=O$. Since $f(x)\geq0,\ \forall\,x\in\X,\forall\,f\in\norm$ we have $[\norm]_0=\{O\}$. Thus $f+(-1)f=O$ $\Longleftrightarrow$ $f\in[\norm]_0=\{O\}$.
\\$\bf{A_6:}$ Clearly, for each $f\in\norm$, $O\leq f$ where, $O\in[\norm]_0$.

Hence $\big(\norm,,+,\cdot,\leq\big)$ forms an exponential vector space over $\K$.
\end{proof}

\begin{Th}
	$\norm$ is a single primitive,  convex, homogeneous and balanced  evs.
\label{t:conhombal}\end{Th}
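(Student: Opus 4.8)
The plan is to verify each of the four properties directly from the pointwise definitions of the operations on $\norm$, exploiting crucially that the scalar multiplication is defined through the modulus $|\alpha|$ and that every $f\in\norm$ takes values in $[0,\infty)$. Since single primitivity, convexity, homogeneity and balancedness are all evs properties, no appeal to a model or to an order-isomorphism is needed; each requirement reduces to an identity or an inequality between real numbers holding for every $x\in\X$.

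First, for single primitivity I would recall from the proof of Theorem \ref{t:norm} that $[\norm]_0=\{O\}$, so that $\norm$ is in fact zero primitive. Consequently $P_f=\{p\in[\norm]_0:p\leq f\}=\{O\}$ for every $f\in\norm$, which is a singleton; and, as already noted in the prerequisites, zero primitivity trivially implies single primitivity.

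Next, homogeneity is immediate from the way scalars act: for any $f\in\norm$ and $\alpha\in\K$ the definition gives $(\alpha f)(x)=|\alpha|f(x)=(|\alpha|f)(x)$ for all $x\in\X$, whence $\alpha f=|\alpha|f$. For convexity I would take $\alpha,\beta\in\K$ with $\alpha,\beta\geq 0$ and use $|\alpha+\beta|=\alpha+\beta=|\alpha|+|\beta|$ to compute $\big((\alpha+\beta)f\big)(x)=|\alpha+\beta|f(x)=|\alpha|f(x)+|\beta|f(x)=(\alpha f+\beta f)(x)$ for every $x$, so that equality is attained in axiom $A_3$(iii). Finally, for balancedness I would take $\alpha\in\K$ with $|\alpha|\leq 1$ and use $f(x)\geq 0$ to obtain $(\alpha f)(x)=|\alpha|f(x)\leq f(x)=(1\cdot f)(x)$ for all $x\in\X$, hence $\alpha f\leq f$.

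There is no genuine obstacle here: each of the four claims collapses to a one-line real-variable computation carried out pointwise. The only points that merit a moment's care are recognising that the modulus appearing in the scalar multiplication is precisely what forces homogeneity, that the hypothesis $\alpha,\beta\geq 0$ in the definition of convexity is exactly what upgrades the inequality of $A_3$(iii) to an equality (via $|\alpha+\beta|=|\alpha|+|\beta|$), and that the nonnegativity of every norm is what supplies the order relation needed for balancedness.
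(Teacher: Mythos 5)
Your proposal is correct and follows essentially the same route as the paper's own proof: establish zero primitivity from $[\norm]_0=\{O\}$ (hence single primitivity), and verify convexity, homogeneity and balancedness by the same pointwise computations, using $|\alpha+\beta|=|\alpha|+|\beta|$ for $\alpha,\beta\geq 0$ and $|\alpha|f(x)\leq f(x)$ for $|\alpha|\leq 1$. No substantive difference.
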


\begin{proof}
	As $[\norm]_0=\{O\}$ so $\norm$ is zero primitive and hence single primitive evs. For $\alpha, \beta\in\K$ with $\alpha,\beta\geq0$, $\big((\alpha+\beta)f\big)(x)=|\alpha+\beta| f(x)$ = $(|\alpha|+|\beta|)f(x)$ = $|\alpha| f(x)+|\beta| f(x)=(\alpha f+\beta f)(x), \forall x\in\X$ $\Longrightarrow$ $(\alpha+\beta)f=\alpha f+\beta f$ for $\alpha, \beta\geq0,\forall\,f\in\norm$. Hence, $\norm$ is a convex evs. Also for any $\alpha\in\K$ and any $f\in\norm$, $(\alpha f)(x)=|\alpha|f(x),\forall\,x\in\X$ $\implies$ $\alpha f=|\alpha|f,\forall\,f\in\norm$. Hence $\norm$ is homogeneous.	
	 Again, for $\alpha\in\K$ with $|\alpha|\leq1$, $(\alpha f)(x)=|\alpha| f(x)\leq f(x), \forall x\in\X$ $\Longrightarrow$ $\alpha f \leq f$ for $|\alpha|\leq1$. Thus, $\norm$ is a balanced evs.
\end{proof}

We now give the point open topology on $\norm$. For any $x\in\X$ and any open set $U$ in $[0,\infty)$, let $W(x,U):=\{f\in\norm : f(x)\in U \}$. Then $\big\{W(x,U): x\in\X$, $U$ is open in $[0,\infty)\big\}$ forms a subbase for the point open topology say, $\tau$ on $\norm$. Also, a net $\{f_n\}_{n\in D}$ [$D$ being a directed set] in $\norm$ converges to some $f\in\norm$ iff $f_n(x)\to f(x), \forall\, x\in\X$. 

\begin{Th}
	Consider the evs $\norm$ endowed with the point open topology. Then,
	\\(1) The addition `$+$' $:\norm\times\norm\longrightarrow\norm$ is continuous.
	\\(2) The scalar multiplication `$\cdot$' $: \K\times\norm\longrightarrow\norm$ is continuous, where $\K$ is endowed with the usual topology.
	\\(3) The partial order `$\leq$' is closed.
	\\ Thus $\norm$ with the point open topology forms a topological evs over $\K$.
\label{t:topnorm}\end{Th}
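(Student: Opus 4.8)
The plan is to exploit the explicit description of net convergence in the point open topology---namely that a net $\{f_n\}_{n\in D}$ converges to $f$ if and only if $f_n(x)\to f(x)$ for every $x\in\X$---and thereby reduce all three assertions to elementary facts about the usual topology on $[0,\infty)\subseteq\R$. I would emphasise at the outset that nets, rather than sequences, are the appropriate tool here: when $\X$ is infinite dimensional the point open topology is essentially the subspace topology inherited from the product $[0,\infty)^{\X}$, which need not be first countable, so sequential arguments would be insufficient, whereas convergence of nets determines the topology completely.

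For (1), I would take an arbitrary net $\{(f_n,g_n)\}_{n\in D}$ in $\norm\times\norm$ converging to $(f,g)$. In the product topology this is equivalent to $f_n\to f$ and $g_n\to g$ separately, i.e. to $f_n(x)\to f(x)$ and $g_n(x)\to g(x)$ for each fixed $x\in\X$. Since $(f_n+g_n)(x)=f_n(x)+g_n(x)$, continuity of addition on $\R$ gives $(f_n+g_n)(x)\to f(x)+g(x)=(f+g)(x)$ for every $x$, hence $f_n+g_n\to f+g$, which is exactly continuity of `$+$'. The argument for (2) is parallel: from a net $\{(\alpha_n,f_n)\}$ converging to $(\alpha,f)$ in $\K\times\norm$ one has $\alpha_n\to\alpha$ and $f_n(x)\to f(x)$ for each $x$; since the modulus is continuous, $|\alpha_n|\to|\alpha|$, and as $(\alpha_n f_n)(x)=|\alpha_n|f_n(x)$, continuity of multiplication on $\R$ yields $(\alpha_n f_n)(x)\to|\alpha|f(x)=(\alpha f)(x)$ for every $x$, so $\alpha_n f_n\to\alpha f$.

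For (3), since the definition of a closed order only requires the graph $G_{\leq}(\norm)$ to be closed in $\norm\times\norm$, I would verify closedness directly via nets rather than invoking the neighbourhood criterion of Theorem \ref{t:partcl}. Suppose $\{(f_n,g_n)\}_{n\in D}$ lies in the graph, so $f_n\leq g_n$, and converges to $(f,g)$. Fixing $x\in\X$, the relation $f_n\leq g_n$ gives $f_n(x)\leq g_n(x)$, while $f_n(x)\to f(x)$ and $g_n(x)\to g(x)$; because the set $\{(s,t)\in[0,\infty)^2 : s\leq t\}$ is closed in $[0,\infty)^2$ (the usual order on $\R$ being closed), the limit satisfies $f(x)\leq g(x)$. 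As $x$ was arbitrary, $f\leq g$, so the limit point lies in the graph and the order is closed. Combining the three parts shows $\norm$ with the point open topology is a topological evs.

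The routine calculations here are genuinely routine, so I do not expect a serious obstacle; the only point demanding care is the justification that convergence in the product topologies on $\norm\times\norm$ and $\K\times\norm$ is coordinatewise and that coordinatewise convergence in $\norm$ is pointwise, which is precisely where the stated net-convergence characterisation is invoked. Everything else collapses to continuity of `$+$', of multiplication and of $|\cdot|$, together with closedness of `$\leq$' on the real line.
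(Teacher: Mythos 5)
Your proposal is correct and follows essentially the same route as the paper: both arguments use the net-convergence characterisation of the point open topology to reduce (1)--(3) to continuity of addition and multiplication and closedness of the order on $[0,\infty)$, with the graph of `$\leq$' checked directly by nets rather than via the neighbourhood criterion. The only cosmetic difference is in (2), where the paper writes out the triangle-inequality estimate $\big||\alpha_n|f_n(x)-|\alpha|f(x)\big|\leq|\alpha_n|\big|f_n(x)-f(x)\big|+|f(x)|\big||\alpha_n|-|\alpha|\big|$ while you invoke joint continuity of multiplication on $\R$; these are interchangeable.
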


\begin{proof}
	(1) Let $\{f_n\}_{n\in D}$ and $\{g_n\}_{n\in D}$ be two nets in $\norm$ [ $D$ being a directed set ] such that $f_n\to f$ and $g_n\to g$ in $\norm$. Then $f_n(x)\to f(x)$ and $g_n(x)\to g(x)$ in $[0,\infty)$, $\forall x\in\X$ $\Longrightarrow$ $f_n(x)+g_n(x)\to f(x)+g(x)$, $\forall x\in\X$ [ $\because$ `+' is continuous in $[0,\infty)$ ] i.e. $(f_n+g_n)(x)\to(f+g)(x)$, $\forall x\in\X$ $\Longrightarrow$ $(f_n+g_n)\to(f+g)$ in $\norm$. Hence the addition is continuous in $\norm$.
	
	(2) Let $\{f_n\}_{n\in D}$ be a net in $\norm$ and $\{\alpha_n\}_{n\in D}$ be a net in $\K$ [$D$ being a directed set] such that $f_n\to f$ in $\norm$ and $\alpha_n\to \alpha$ in $\K$. Then $f_n(x)\to f(x)$, $\forall x\in\X$. Now for any $x\in \X$,	
	\begin{align*}
		&\big|(\alpha_n f_n)(x)-(\alpha f)(x)\big|= \big||\alpha_n| f_n(x)-|\alpha| f(x)\big|\\
		&\hspace{4cm}=\big||\alpha_n| f_n(x)-|\alpha_n| f(x)+|\alpha_n| f(x)-|\alpha| f(x)\big|\\
		&\hspace{4cm} \leq|\alpha_n| \big| f_n(x)-f(x)\big| + | f(x)|\big||\alpha_n|-|\alpha|\big|\\
		&\hspace{4cm}\to0 \ [ \because |\alpha_n|\to|\alpha|, f_n(x)\to f(x), \forall x\in\X ].
	\end{align*}
	$\therefore\ \alpha_n f_n\to\alpha f$ in $\norm$. Hence the scalar multiplication is continuous in $\norm$.
	
	(3) Let $\{(f_n, g_n)\}_{n\in D}$ be a net in $\norm\times \norm$ [ $D$ being a directed set ] converging to some $(f,g)\in \norm\times \norm$ with $f_n\leq g_n$, $\forall\, n\in D$. So, $f_n\to f$,  $g_n\to g$ in $\norm$. Therefore, $f_n(x)\to f(x)$, $g_n(x)\to g(x)$, $\forall x\in\X$ and $f_n(x)\leq g_n(x)$, $\forall x\in\X$, $\forall\, n\in D$ $\Longrightarrow$ $f(x)\leq g(x)$, $\forall x\in\X$ [ $\because$ `$\leq$' is closed in the topological evs $[0, \infty)$ ] $\Longrightarrow$ $f\leq g$ in $\norm$.
	Hence the partial order is closed in $\norm$.
	\\Therefore, $\big(\norm, \tau\big)$ is a topological evs over $\K$. Again, since $\norm\subseteq$ $[0,\infty)^\X$, where $[0,\infty)$ is a Tychonoff space and $\tau$ is the point open topology, so $\big(\norm, \tau\big)$ is also a Tychonoff space.
\end{proof}

\section{Characterisation of equivalent norms on a linear space}

In this section we first discuss the concept of \textit{comparing function} defined on a zero primitive topological evs. Then we shall compute the comparing function on the zero primitive topological evs $\norm$. Finally, using this comparing function we shall prove necessary and sufficient condition for equivalence of two norms on a linear space over $\K$. We shall conclude this paper by proving, with the help of comparing function, a standard result from functional analysis that in an infinite dimensional linear space there are uncountably many non-equivalent norms.

\begin{Def}
	Let $X$ be a zero primitive evs over the field $\K$ of real or complex numbers with additive identity $\theta$. For $x,y\in X$, we define the \textit{comparing spectrum of $y$ relative to $x$} by $\sigma_{x}(y):=\big\{\lambda\in\K:$ $\lambda x\leq y\big\}$.
	
	Since $X$ is zero primitive, $y\geq\theta,\forall\,y\in X$ and hence $0\in\sigma_x(y)$ so that it is nonempty.
\end{Def}

\begin{Prop}{\em\cite{bal}}
For a topological evs $X$, $\sigma_x(y)$ is bounded, $\forall\,x\neq\theta,\forall\,y\in X$.
\label{p:specbdd}\end{Prop}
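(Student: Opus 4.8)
The plan is to argue by contradiction, exploiting the closedness of the order together with the continuity of the scalar multiplication. Recall that here $X$ is zero primitive (this is the standing hypothesis under which $\sigma_x(y)$ is defined). Suppose $\sigma_x(y)$ is unbounded for some $x\neq\theta$ and some $y\in X$. Then there is a sequence $\{\lambda_n\}$ in $\sigma_x(y)$ with $|\lambda_n|\to\infty$; in particular each $\lambda_n\neq 0$ and $\lambda_n x\leq y$ for every $n$.

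The key idea is to renormalise these relations so that the left-hand sides become modulus-one multiples of $x$, while the right-hand sides collapse to $\theta$. Since $\tfrac{1}{|\lambda_n|}>0$, applying axiom $A_2$ to $\lambda_n x\leq y$ gives $\tfrac{1}{|\lambda_n|}(\lambda_n x)\leq\tfrac{1}{|\lambda_n|}y$; by axiom $A_3$(ii) the left member equals $\mu_n x$, where $\mu_n:=\tfrac{\lambda_n}{|\lambda_n|}$ has $|\mu_n|=1$. On the right, $\tfrac{1}{|\lambda_n|}\to 0$, so continuity of scalar multiplication together with $A_4$ yields $\tfrac{1}{|\lambda_n|}y\to 0\cdot y=\theta$. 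I thus obtain, for every $n$, the relation $\mu_n x\leq\tfrac{1}{|\lambda_n|}y$ whose right member tends to $\theta$.

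To finish, I would pass to the limit. The scalars $\mu_n$ lie in the unit sphere of $\K$, which is compact (it is $\{-1,1\}$ when $\K=\R$ and the unit circle when $\K=\C$), so some subsequence satisfies $\mu_{n_k}\to\mu$ with $|\mu|=1$. Continuity of scalar multiplication then gives $\mu_{n_k}x\to\mu x$, while $\tfrac{1}{|\lambda_{n_k}|}y\to\theta$ as before. Because the order is closed, passing to the limit in $\mu_{n_k}x\leq\tfrac{1}{|\lambda_{n_k}|}y$ yields $\mu x\leq\theta$. Since $X$ is zero primitive, axiom $A_6$ forces $\theta\leq z$ for every $z\in X$, in particular $\theta\leq\mu x$; by antisymmetry of the partial order, $\mu x=\theta$. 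But then $A_4$ requires $\mu=0$ or $x=\theta$, contradicting $|\mu|=1$ and $x\neq\theta$. Hence $\sigma_x(y)$ must be bounded.

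The main obstacle, and the crux of the argument, is the renormalisation step: one must divide by $|\lambda_n|$ (rather than by $\lambda_n$) so as to retain a modulus-one scalar whose orbit $\mu_n x$ stays away from $\theta$, while simultaneously driving the right-hand side to $\theta$. The compactness of the unit sphere of $\K$ is what guarantees that the renormalised left-hand sides possess a genuine nonzero limit, and it is the interplay of this limit with the closedness of the order, zero primitivity, and axiom $A_4$ that produces the contradiction.
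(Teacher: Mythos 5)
Your proof is correct, but note first that there is no proof in the paper to compare against: Proposition \ref{p:specbdd} is quoted from the cited reference \cite{bal}. Judged on its own merits, your argument is sound --- the $A_2$/$A_3$(ii) renormalisation, the limit $\tfrac{1}{|\lambda_n|}y\to 0\cdot y=\theta$ via continuity of scalar multiplication and $A_4$, the compactness of the unit sphere of $\K$, the passage to the limit through the closed order, and the final contradiction via antisymmetry and $A_4$ are all legitimate steps. However, your closing claim that one \emph{must} divide by $|\lambda_n|$ rather than by $\lambda_n$ is exactly backwards, and it is what makes your route longer than necessary. Dividing by $\lambda_n$ itself gives $\tfrac{1}{\lambda_n}(\lambda_n x)=1\cdot x=x$ by $A_3$(ii) and $A_3$(iv), so the relations become $x\leq\tfrac{1}{\lambda_n}y$ with a \emph{fixed} left-hand side; letting $n\to\infty$ and invoking closedness of the order then yields $x\leq\theta$ directly, with no need for the unit sphere, subsequence extraction, or a limit scalar $\mu$. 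Moreover, the last step does not require zero primitivity: in \emph{any} evs one has $\theta\in X_0$ (since $(-1)\cdot\theta=\theta$ by $A_4$, so $\theta+(-1)\cdot\theta=\theta$ and $A_5$ applies), i.e.\ $\theta$ is a minimal element, so $x\leq\theta$ together with $x\neq\theta$ is already a contradiction. This shorter argument has the additional merit of proving the proposition as actually stated --- for an arbitrary topological evs --- whereas your proof, as written, leans on the standing zero-primitivity hypothesis to upgrade $\mu x\leq\theta$ to $\mu x=\theta$; if you replace that appeal by the minimality of $\theta$, your version too becomes fully general.
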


\begin{Def}{\cite{Pri}}
	Let $X$ be a zero primitive topological evs over the field $\K$ of real or complex numbers. For any $x\,(\neq\theta)\in X$, the \textit{comparing function relative to $x$}, denoted by $C_{x}$, is defined as\\ \centerline{$C_{x}(y):=\underset{\lambda\in\sigma_{x}(y)}{\sup}|\lambda|$, for all $y\in X$.}
	
	By Proposition \ref{p:specbdd}, $C_x$ is well-defined and $C_x(y)\in[0,\infty), \forall\,y\in X,\forall\,x\neq\theta$.
\end{Def}

\begin{Prop}
	Let $X$ be a zero primitive topological evs over the field $\K$. For any non-zero homogeneous element $x\in X$ and any $y\in X$, $C_x(y)x\leq y$.
\label{p:leq}\end{Prop}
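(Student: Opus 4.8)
The plan is to exploit the fact that, although the supremum $c:=C_x(y)=\sup_{\lambda\in\sigma_x(y)}|\lambda|$ defining the comparing function need not be attained, it can always be approximated from within $\sigma_x(y)$; one then passes to the limit using the two defining topological features of a topological evs, namely continuity of scalar multiplication and closedness of the partial order. I would begin by setting $c:=C_x(y)$. Since $x\neq\theta$, Proposition \ref{p:specbdd} guarantees that $\sigma_x(y)$ is bounded, so $c$ is a finite nonnegative real number and $c=C_x(y)$ is well-defined. By the very definition of the supremum of the set $\{|\lambda|:\lambda\in\sigma_x(y)\}\subseteq[0,\infty)$, I can select a sequence $\{\lambda_n\}$ in $\sigma_x(y)$ with $|\lambda_n|\to c$ in $[0,\infty)$.

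The crucial use of homogeneity of $x$ comes next. Each $\lambda_n\in\sigma_x(y)$ means $\lambda_n x\leq y$, and because $x$ is homogeneous we have $\lambda_n x=|\lambda_n|x$; hence $|\lambda_n|x\leq y$ for every $n$. This is the step that converts a condition indexed by the scalars $\lambda_n$ into one indexed by the real numbers $|\lambda_n|$ that actually converge to $c$. Now continuity of scalar multiplication $\K\times X\to X$ applies: keeping the second argument $x$ fixed and letting $|\lambda_n|\to c$ in $\K$, we obtain $|\lambda_n|x\to cx$ in $X$.

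Finally, I would invoke closedness of the order. The pairs $\big(|\lambda_n|x,\,y\big)$ all lie in the graph $G_{\leq}(X)=\{(u,v)\in X\times X:u\leq v\}$, and, by the previous step, they converge to $(cx,\,y)$ in $X\times X$ (the second coordinate being constant). Since the partial order of a topological evs is closed, this graph is closed in $X\times X$, so the limit point $(cx,y)$ belongs to it; that is, $cx\leq y$, which is precisely $C_x(y)\,x\leq y$.

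The only genuine obstacle is that the supremum defining $C_x(y)$ may fail to be attained, so one cannot simply claim $c\in\sigma_x(y)$ and read off the inequality immediately. Everything hinges on being able to run the approximation argument, and it is homogeneity of $x$ that makes this possible: it lets me replace the inequalities $\lambda_n x\leq y$ by $|\lambda_n|x\leq y$, after which the continuity of scalar multiplication and the closedness of the order combine to deliver $cx\leq y$ in the limit. No further computation is required.
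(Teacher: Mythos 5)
Your proof is correct and follows essentially the same route as the paper's: choose a sequence $\{\lambda_n\}$ in $\sigma_x(y)$ with $|\lambda_n|\to C_x(y)$, use homogeneity to rewrite $\lambda_n x\leq y$ as $|\lambda_n|x\leq y$, then pass to the limit via continuity of scalar multiplication and closedness of the partial order. Your write-up simply spells out the graph-closedness argument and the role of Proposition \ref{p:specbdd} in slightly more detail than the paper does.
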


\begin{proof}
	By definition of $C_x(y)$, $\exists$ a sequence $\{\lambda_n\}$ in $\sigma_x(y)$ such that $|\lambda_n|\to C_x(y)$ as $n\to\infty$. Then $\lambda_nx\leq y,\forall\,n\in\N$. Now $x$ being homogeneous, $|\lambda_n|x=\lambda_nx,\forall\,n$. So $|\lambda_n|x\leq y,\forall\,n$. Therefore the scalar multiplication being continuous and the partial order `$\leq$' being closed we have $C_x(y)x\leq y$.
\end{proof}

We shall now compute the comparing function in the topological evs $\norm$.

\begin{Th}
	Consider the topological evs $\mathcal{N}(\X)$ and let $f\,(\neq O)\in\mathcal{N}(\X)$. Then the comparing function relative to $f$ is $C_{f}(g)=\underset{x\in\X\smallsetminus\{\theta\}}{\inf}\frac{g(x)}{f(x)}$, $\forall\,g\in\mathcal{N}(\X)$. Here $\theta$ is the zero element in the vector space $\X$.
\end{Th}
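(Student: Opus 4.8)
The plan is to compute the comparing spectrum $\sigma_f(g)$ explicitly and then simply read off its supremum. Fix $f\,(\neq O)\in\norm$; since $f$ is then a genuine norm, $f(x)>0$ for every $x\in\X\smallsetminus\{\theta\}$, so the ratio $g(x)/f(x)$ is meaningful there. Set $m:=\inf_{x\in\X\smallsetminus\{\theta\}} g(x)/f(x)$. This infimum is a well-defined nonnegative real number: each ratio is $\geq 0$, and the infimum is bounded above by the value of the ratio at any single nonzero $x$, hence finite.

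First I would characterise membership in the spectrum. For $\lambda\in\K$, by definition $\lambda\in\sigma_f(g)$ iff $\lambda f\leq g$, i.e. $|\lambda|\,f(x)\leq g(x)$ for all $x\in\X$. The case $x=\theta$ gives $0\leq 0$ and imposes no restriction, while for $x\neq\theta$ one may divide by $f(x)>0$ to obtain the equivalent condition $|\lambda|\leq g(x)/f(x)$. Demanding this for every $x\neq\theta$ is exactly the statement that $|\lambda|$ is a lower bound of $\{g(x)/f(x): x\in\X\smallsetminus\{\theta\}\}$, which holds iff $|\lambda|\leq m$. Thus $\sigma_f(g)=\{\lambda\in\K:|\lambda|\leq m\}$, the closed disc (a closed interval in the real case) of radius $m$ in $\K$. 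Taking the supremum of $|\lambda|$ over this disc then yields $C_f(g)=\sup_{|\lambda|\leq m}|\lambda|=m$, which is the asserted formula; the degenerate case $g=O$ is covered automatically, since there every ratio vanishes, $m=0$, and indeed $\sigma_f(O)=\{0\}$.

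I do not anticipate a genuine obstacle. The only points needing care are the legitimacy of dividing by $f(x)$, valid precisely because $f\neq O$ forces $f$ to be a norm and hence strictly positive off $\theta$, and the elementary observation that the quantifier ``for all $x\neq\theta$'' collapses into a single inequality against the infimum $m$. As a consistency check, the homogeneity of $\norm$ proved in Theorem \ref{t:conhombal} lets Proposition \ref{p:leq} apply, giving $C_f(g)\,f\leq g$, i.e. $m\in\sigma_f(g)$; this matches the fact that $m$ lies in the closed disc $\{|\lambda|\leq m\}$, so the supremum is actually attained.
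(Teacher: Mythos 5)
Your proposal is correct and follows essentially the same route as the paper's own proof: both reduce membership $\lambda\in\sigma_f(g)$ to the inequality $|\lambda|\leq g(x)/f(x)$ for all $x\in\X\smallsetminus\{\theta\}$ and then invoke the definition of the infimum. The only cosmetic difference is that you package the two directions as an exact description of $\sigma_f(g)$ as the closed disc $\{\lambda\in\K:|\lambda|\leq m\}$ (which in passing shows the supremum is attained), whereas the paper proves the inequalities $s\leq C_f(g)$ and $C_f(g)\leq s$ separately.
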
  

\begin{proof}
	Here $\sigma_{f}(g)$  $=\big\{\lambda\in\K:\lambda f\leq g\big\}=\big\{\lambda\in\K:|\lambda| f(x)\leq g(x),\text{for all }x\in\X\big\}$. Define $s:=$ $\underset{x\in\X\smallsetminus\{\theta\}}{\inf}$ $\frac{g(x)}{f(x)}$. Clearly $s\in[0,\infty)$. We will prove that $C_{f}(g)=s$.

	By definition of $s$, $s \leq$ $\frac{g(x)}{f(x)}$ for all $x\in\X\smallsetminus\{\theta\}$ $\implies s f(x)\leq g(x)$ for all $x\in\X$ $\implies s\cdot f\leq g$ $\implies s\in \sigma_{f}(g)$. Hence $s\leq\underset{\lambda\in\sigma_{f}(g)}{\sup}|\lambda|$ $= C_{f}(g)$. 
	
	To prove the converse, let $\lambda\in\sigma_{f}(g)$. Therefore $\lambda\cdot f\leq g$ $\implies |\lambda| f(x)\leq g(x)$ for all $x\in\X$ $\implies|\lambda|\leq$ $\frac{g(x)}{f(x)}$ for all $x\in\X\smallsetminus\{\theta\}$ $\implies$ $|\lambda|\leq$ $\underset{x\in\X\smallsetminus\{\theta\}}{\inf}$ $\frac{g(x)}{f(x)}$ $= s$. This holds for any $\lambda\in\sigma_{f}(g)$. Therefore $\underset{\lambda\in\sigma_{f}(g)}{\sup}|\lambda|$ $\leq s$ i.e. $C_{f}(g)\leq s$.

	Consequently, $C_{f}(g)$ $ =\underset{x\in\X\smallsetminus\{\theta\}}{\inf}$ $\frac{g(x)}{f(x)}$, for all $g\in\mathcal{N}(\X)$.	
\end{proof}

\begin{Th}
Let $f,g$ be two norms on a linear space $\X$. Then $g$ produces larger topology than the topology produced by  $f$ if and only if $C_f(g)\neq0$.
\end{Th}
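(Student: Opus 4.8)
The plan is to reduce the statement to the classical functional-analytic fact that, for two norms $f,g$ on $\X$, the topology generated by $g$ is larger (finer) than the one generated by $f$ precisely when $f$ is dominated by a positive multiple of $g$, and then to read off this domination from the formula $C_f(g)=\inf_{x\in\X\smallsetminus\{\theta\}} g(x)/f(x)$ established in the preceding theorem. Throughout I would use that $C_f(g)\in[0,\infty)$, so that the condition $C_f(g)\neq 0$ is the same as $C_f(g)>0$.

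First I would record the topological criterion. The topology of $g$ is larger than that of $f$ if and only if the identity map $(\X,g)\to(\X,f)$ is continuous; since both are norm topologies, continuity at $\theta$ gives some $\delta>0$ with $g(x)<\delta\Rightarrow f(x)<1$, and the homogeneity of norms upgrades this to the global inequality $f(x)\leq M\,g(x)$ for all $x\in\X$, with $M=1/\delta$. Conversely such a domination inequality trivially makes the identity continuous. I would either include this short scaling argument or cite it as a standard result, since it is the genuine analytic content of the theorem.

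Next comes the purely formal translation. For the forward implication, assume $g$ produces the larger topology; then $f(x)\leq M\,g(x)$ for all $x$ with some $M>0$, whence $g(x)/f(x)\geq 1/M$ for every $x\in\X\smallsetminus\{\theta\}$, and taking the infimum gives $C_f(g)\geq 1/M>0$, so $C_f(g)\neq 0$. For the converse, assume $C_f(g)=c>0$. Since $\norm$ is homogeneous (Theorem \ref{t:conhombal}) and $f\neq O$ is a non-zero element, Proposition \ref{p:leq} yields $C_f(g)\cdot f\leq g$, that is $c\,f(x)\leq g(x)$ for all $x\in\X$; equivalently $f(x)\leq (1/c)\,g(x)$, which by the topological criterion means that $g$ produces the larger topology. (Alternatively, the inequality $g(x)/f(x)\geq c$ needed here follows at once from the very definition of $c$ as the infimum.)

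The main obstacle I anticipate is the careful justification of the topological criterion, in particular making the passage from continuity of the identity at $\theta$ to the global norm-domination inequality fully rigorous via homogeneity; this is where the actual analysis lives. Once that equivalence is in hand, the remainder is a direct substitution of the computed value of $C_f(g)$ together with the automatic nonnegativity of the comparing function, and Proposition \ref{p:leq} supplies the converse inequality cleanly within the evs framework.
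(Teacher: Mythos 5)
Your proposal is correct and takes essentially the same approach as the paper: both reduce the statement to the standard equivalence between inclusion of norm topologies and a domination inequality $\lambda f\leq g$ ($\lambda>0$), and both obtain the needed domination for the ``if'' direction from Proposition \ref{p:leq}, which gives $C_f(g)f\leq g$. Your only deviation---using the computed formula $C_f(g)=\inf_{x\in\X\smallsetminus\{\theta\}}g(x)/f(x)$ where the paper argues via $\lambda\in\sigma_f(g)\Rightarrow\lambda\leq C_f(g)$---is immaterial, since the preceding theorem of the paper shows these coincide.
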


\begin{proof}
Since $\norm$ is a homogeneous, zero primitive, topological evs (by Theorems \ref{t:conhombal} and \ref{t:topnorm}), we have by Proposition \ref{p:leq}, $C_f(g)f\leq g$ $\implies$ $C_f(g)f(x)\leq g(x),\forall\,x\in\X$. Let $\ta(f)$ and $\ta(g)$ be the topologies generated by the norms $f,g$ respectively. Then $C_f(g)\neq0\implies\ta(g)\supseteq\ta(f)$.

Conversely, let $\ta(g)\supseteq\ta(f)$. Then $\exists\,\lambda>0$ such that $\lambda f(x)\leq g(x),\forall\,x\in\X$ $\implies$ $\lambda f\leq g$ $\implies$ $\lambda\in\sigma_f(g)$ $\implies$ $\lambda\leq C_f(g)$ $\implies C_f(g)\neq0$.
\end{proof}	
\begin{Th}
	In a linear space $\X$ over the field $\K$ of all real or complex numbers, any two norms $f,g$ are equivalent if and only if $C_f(g)C_g(f)\neq0$.
\label{t:NASC}\end{Th}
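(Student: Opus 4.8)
The plan is to obtain this statement as a direct consequence of the immediately preceding theorem, applied symmetrically in both $f$ and $g$. Recall that two norms $f,g$ on $\X$ are equivalent precisely when they generate the same topology, i.e. when both inclusions $\ta(f)\subseteq\ta(g)$ and $\ta(g)\subseteq\ta(f)$ hold simultaneously. So the whole argument reduces to characterising each inclusion separately and then combining them.

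First I would invoke the preceding theorem, which asserts that $\ta(g)\supseteq\ta(f)$ if and only if $C_f(g)\neq0$. Interchanging the roles of $f$ and $g$ in that same statement yields that $\ta(f)\supseteq\ta(g)$ if and only if $C_g(f)\neq0$. Combining the two equivalences gives: $f$ and $g$ are equivalent $\iff\ta(f)=\ta(g)\iff C_f(g)\neq0$ and $C_g(f)\neq0$ hold together. The final step is to convert the conjunction ``$C_f(g)\neq0$ and $C_g(f)\neq0$'' into the single condition ``$C_f(g)C_g(f)\neq0$'', which is immediate: as recorded after the definition of the comparing function, $C_f(g),C_g(f)\in[0,\infty)$, and a product of two nonnegative reals is nonzero exactly when both factors are nonzero.

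I expect no genuine obstacle here, since Theorem \ref{t:NASC} is essentially a corollary of the previous one. The only points needing care are (i) matching the analytic notion of equivalence (existence of constants $a,b>0$ with $af\leq g\leq bf$ pointwise) with the order-theoretic inequalities $af\leq g$ and $g\leq bf$ in $\norm$; and (ii) the elementary nonnegativity remark used at the end. For (i) one notes that $af\leq g$ for some $a>0$ is exactly the condition $C_f(g)\neq0$, because $C_f(g)=\inf_{x\neq\theta}\frac{g(x)}{f(x)}$: any such $a$ lies in $\sigma_f(g)$ and forces $C_f(g)\geq a>0$, while conversely $C_f(g)>0$ already gives a positive lower bound.

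As an alternative, I could bypass the topological reformulation and argue purely through these infima, which is perhaps cleaner. Since $\norm$ is homogeneous and zero primitive (Theorems \ref{t:conhombal} and \ref{t:topnorm}), Proposition \ref{p:leq} gives $C_f(g)\,f\leq g$ and $C_g(f)\,g\leq f$. If $C_f(g)C_g(f)\neq0$, then both factors are positive and these two inequalities furnish the two-sided estimate $C_f(g)\,f\leq g$ and $g\leq\frac{1}{C_g(f)}f$ witnessing equivalence. Conversely, equivalence supplies positive constants realising each one-sided bound, which force $C_f(g)>0$ and $C_g(f)>0$, hence $C_f(g)C_g(f)\neq0$. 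Either route completes the proof; the content is entirely concentrated in the preceding theorem, so the present statement is a short deduction.
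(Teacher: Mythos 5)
Your proposal is correct, and your ``alternative'' argument is precisely the paper's own proof: the forward direction extracts constants $\lambda,\mu>0$ from equivalence and places $\lambda\in\sigma_f(g)$, $\frac{1}{\mu}\in\sigma_g(f)$, while the converse applies Proposition \ref{p:leq} twice (using that $\norm$ is a homogeneous, zero primitive, topological evs) to get $C_f(g)f\leq g\leq\frac{1}{C_g(f)}f$. Your first route, via the preceding theorem on comparison of topologies applied symmetrically, is only a superficial repackaging of the same content, so there is nothing further to flag.
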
	

\begin{proof}
	Let $f,g$ be two equivalent norms on $\X$. Then there exists $\lambda,\mu>0$ such that $\lambda f(x)\leq g(x)\leq \mu f(x),\forall\,x\in\X$ $\implies$ $\lambda\in\sigma_f(g)$ and $\frac{1}{\mu}\in\sigma_g(f)$. So by definition, $\lambda\leq C_f(g)$ and $\frac{1}{\mu}\leq C_g(f)$ $\implies$ $C_f(g)C_g(f)\neq0$.
	
	Conversely let $C_f(g)C_g(f)\neq0$. Since $\norm$ is a homogeneous, zero primitive, topological evs (by Theorems \ref{t:conhombal} and \ref{t:topnorm}), we have by Proposition \ref{p:leq} that $C_f(g)f\leq g\leq\frac{1}{C_g(f)}f$ $\implies$ $C_f(g)f(x)\leq g(x)\leq\frac{1}{C_g(f)}f(x)$, for all $x\in\X$. This justifies that $f,g$ are two equivalent norms on $\X$.
\end{proof}

\begin{Ex}
	Let $c_{_{00}}$ be the linear space of all sequences in $\K$ whose all but finitely many terms are zero. Then the norms $\|\cdot\|_{\infty}$ and $\|\cdot\|_1$ on $c_{_{00}}$ are non-equivalent. To justify this we show that $C_{\|\cdot\|_1}(\|\cdot\|_{\infty})=0$. Let us consider a sequence $\{x^n\}$ in $c_{_{00}}$, where $x^n=(1,2,\ldots,n,0,0,\ldots)\in c_{_{00}}$. Then $C_{\|\cdot\|_1}(\|\cdot\|_{\infty})=\inf\limits_{\substack{x\in c_{_{00}}\\ x\neq0}}\frac{\|x\|_{\infty}}{\|x\|_{1}}\leq\inf\limits_{n\in\N}\frac{\|x^n\|_{\infty}}{\|x^n\|_{1}}=\inf\limits_{n\in\N}\frac{n}{1+2+\cdots+n}=\lim\limits_{n\to\infty}\frac{2}{n+1}=0 $.\qed
\end{Ex}

This example is instructive to show that there exists non-equivalent norms on any infinite dimensional linear space. We shall prove this standard result from functional analysis using comparing function.

\begin{Th}
	Let $\X$ be an infinite dimensional linear space over the field $\K$. Then there exist non-equivalent norms on $\X$.
\end{Th}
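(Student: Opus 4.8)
The plan is to exhibit two concrete norms on $\X$ whose comparing function vanishes, and then invoke Theorem \ref{t:NASC}. Since $\X$ is infinite dimensional, I would first choose a countably infinite linearly independent subset $\{e_n:n\in\N\}$ of $\X$ and extend it (via Zorn's Lemma) to a Hamel basis $B$ of $\X$ with $\{e_n:n\in\N\}\subseteq B$. Every $x\in\X$ then has a unique representation $x=\sum_{b\in B}c_b(x)\,b$ in which only finitely many coordinates $c_b(x)\in\K$ are non-zero, and each coordinate map $c_b:\X\to\K$ is linear.

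Next I would define two functions on $\X$ by
\[
f(x):=\sum_{b\in B}|c_b(x)|\qquad\text{and}\qquad g(x):=\sup_{b\in B}|c_b(x)|,
\]
the sum and supremum being finite because each $x$ has finite support. A routine verification -- using the linearity of the maps $c_b$ together with the uniqueness of the Hamel-basis representation -- shows that both $f$ and $g$ are genuine norms on $\X$: positivity and definiteness follow from the equivalence $x=\theta\iff c_b(x)=0$ for all $b$, absolute homogeneity is immediate, and the triangle inequality is inherited coordinatewise from the modulus on $\K$.

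The key step is then to show $C_f(g)=0$, where $C_f(g)=\inf_{x\neq\theta}\frac{g(x)}{f(x)}$. I would test this infimum on the vectors $y_n:=e_1+e_2+\cdots+e_n$, for which $c_{e_k}(y_n)=1$ for $1\le k\le n$ and all other coordinates vanish; hence $g(y_n)=1$ while $f(y_n)=n$, giving $\frac{g(y_n)}{f(y_n)}=\frac1n\to0$ as $n\to\infty$. Therefore the infimum is $0$, i.e.\ $C_f(g)=0$, and consequently $C_f(g)\,C_g(f)=0$. By Theorem \ref{t:NASC} the norms $f$ and $g$ cannot be equivalent, which is exactly what is required.

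This argument is essentially a lift of the preceding $c_{_{00}}$ example to an arbitrary infinite-dimensional space, so no single step is a serious obstacle; the only point demanding care is the existence and good behaviour of the coordinate functionals, which rests on the uniqueness of representation in a Hamel basis (and the Axiom of Choice for the basis itself). Once the two $\ell^1$- and $\ell^\infty$-type norms are in hand, the test sequence $\{y_n\}$ makes the comparing function collapse to $0$ immediately.
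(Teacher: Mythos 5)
Your proof is correct and follows essentially the same route as the paper: construct $\ell^1$- and $\ell^\infty$-type norms from a Hamel basis containing a countable independent set, test the ratio on vectors supported on $e_1,\dots,e_n$ to force $C_f(g)=0$, and invoke Theorem \ref{t:NASC}. The only (immaterial) difference is your test sequence $y_n=e_1+\cdots+e_n$ giving ratio $\tfrac1n$, where the paper uses $x_n=e_1+2e_2+\cdots+ne_n$ giving $\tfrac{2}{n+1}$.
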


\begin{proof}
Let $H$ be a Hamel basis of $\X$.  Since $H$ is infinite we can find an enumerable subset $B:=\{e_n\in H:n\in\N\}$ of $H$. Now each $x\in\X$ can be expressed uniquely as $x=\sum\limits_{i=1}^n\alpha_ih_i$, for some $h_i\in H$ and $\alpha_i\in\K$. We define two norms on $\X$ as follows :\\
\centerline{$\|x\|_{\infty}:=\max\limits_{1\leq i\leq n}|\alpha_i|$ and $\|x\|_1:=\sum\limits_{i=1}^n|\alpha_i|$, if $x$ has the representation as above.}\\
We show below that these two norms are not equivalent.

Actually we show that $C_{\|\cdot\|_1}(\|\cdot\|_{\infty})=0$. For this we construct a sequence $\{x_n\}$ in $\X$ by $x_n:=e_1+2e_2+\cdots+ne_n,\forall\,n\in\N$. Then $\|x_n\|_{\infty}=n$ and $\|x_n\|_1=1+2+\cdots+n$. Therefore $C_{\|\cdot\|_1}(\|\cdot\|_{\infty})=\inf\limits_{\substack{x\in\X\\ x\neq0}}\frac{\|x\|_{\infty}}{\|x\|_{1}}\leq\inf\limits_{n\in\N}\frac{\|x_n\|_{\infty}}{\|x_n\|_{1}}=\inf\limits_{n\in\N}\frac{n}{1+2+\cdots+n}=\lim\limits_{n\to\infty}\frac{2}{n+1}=0 $. Then the theorem follows from Theorem \ref{t:NASC}.
\end{proof}

We now show that there are uncountably many non-equivalent norms on any infinite dimensional linear space using comparing functions.

\begin{Th}
	Let $\X$ be an infinite dimensional linear space over the field $\K$. Then there exist uncountably many non-equivalent norms on $\X$.
\end{Th}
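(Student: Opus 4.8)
The plan is to exhibit a concrete one-parameter family of norms indexed by the continuum and to show, using the characterisation in Theorem \ref{t:NASC}, that any two distinct members of the family are non-equivalent. I would retain the Hamel basis $H$ of $\X$ and its enumerable subset $B=\{e_n:n\in\N\}$ from the previous proof. For each real number $p\in[1,\infty)$ I define a norm $\|\cdot\|_p$ on $\X$ by
\[
\|x\|_p:=\Big(\sum_{i=1}^n|\alpha_i|^p\Big)^{1/p},\qquad x=\sum_{i=1}^n\alpha_ih_i,
\]
using the unique finite representation of $x$ in the basis $H$. The family $\{\|\cdot\|_p:p\in[1,\infty)\}$ is indexed by the uncountable set $[1,\infty)$, so the theorem will follow once I show that $\|\cdot\|_p$ and $\|\cdot\|_q$ are non-equivalent whenever $p\neq q$.

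First I would verify that each $\|\cdot\|_p$ is genuinely a norm on $\X$. Positivity, definiteness and positive homogeneity are immediate from the uniqueness of the representation; the triangle inequality is the only point needing care, since for $x+y$ one must first extend the coefficient tuples of $x$ and of $y$ to the common finite set of basis vectors occurring in either (padding with zeros) and then apply Minkowski's inequality to the resulting finite coefficient vectors. This reduces the whole matter to the classical finite-dimensional $\ell^p$ inequality.

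Next, for the non-equivalence, I would take $p<q$ and recall from Theorem \ref{t:NASC} that it suffices to show $C_{\|\cdot\|_p}(\|\cdot\|_q)\,C_{\|\cdot\|_q}(\|\cdot\|_p)=0$; in particular it is enough to prove that one of the two factors vanishes. I would test with the sequence $x_n:=e_1+e_2+\cdots+e_n$ drawn from $B$, for which $\|x_n\|_p=n^{1/p}$ and $\|x_n\|_q=n^{1/q}$. Using the formula $C_{\|\cdot\|_p}(\|\cdot\|_q)=\inf_{x\neq\theta}\frac{\|x\|_q}{\|x\|_p}$ computed earlier, one obtains
\[
C_{\|\cdot\|_p}(\|\cdot\|_q)\le\inf_{n\in\N}\frac{\|x_n\|_q}{\|x_n\|_p}=\inf_{n\in\N}n^{\frac1q-\frac1p}=\lim_{n\to\infty}n^{\frac1q-\frac1p}=0,
\]
since $\frac1q-\frac1p<0$. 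Hence the product of the two comparing functions is zero and, by Theorem \ref{t:NASC}, $\|\cdot\|_p$ and $\|\cdot\|_q$ are non-equivalent. As $p$ ranges over $[1,\infty)$ this produces an uncountable, pairwise non-equivalent family, which proves the theorem.

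I would expect the only genuine obstacle to be bookkeeping rather than conceptual depth: establishing that each $\|\cdot\|_p$ is a norm on an arbitrary, possibly uncountable-dimensional, space through the Hamel representation, and keeping track of the correct direction of the comparing-function computation. It is the factor $C_{\|\cdot\|_p}(\|\cdot\|_q)$ with the larger exponent $q$ in the numerator that vanishes for $p<q$, whereas $C_{\|\cdot\|_q}(\|\cdot\|_p)\ge 1$; fortunately Theorem \ref{t:NASC} only requires the product to vanish. An equally valid, Minkowski-free alternative would be to use the weighted $\ell^1$ norms $\|x\|_t:=\sum_i w_t(h_i)|\alpha_i|$ with $w_t(e_k):=k^t$ on $B$ and $w_t\equiv1$ off $B$, indexed by $t\in[0,\infty)$; here the single basis vectors $e_n$ already serve as test elements, since $\|e_n\|_s/\|e_n\|_t=n^{s-t}\to0$ for $s<t$.
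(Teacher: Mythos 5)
Your proposal is correct and follows essentially the same route as the paper: the same Hamel-basis $p$-norm family, the same use of Theorem \ref{t:NASC}, and the same comparing-function computation — your test vectors $e_1+\cdots+e_n$ are just scalar multiples of the paper's $ne_1+\cdots+ne_n$, and showing $C_{\|\cdot\|_p}(\|\cdot\|_q)=0$ for $p<q$ is the paper's claim with $p$ and $q$ relabelled. Your added verification of the norm axioms via Minkowski's inequality is a point the paper leaves implicit, but it does not change the argument.
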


\begin{proof}
	Let $H$ be a Hamel basis of $\X$.  Since $H$ is infinite we can find an enumerable subset $B:=\{e_n\in H:n\in\N\}$ of $H$. Now each $x\in\X$ can be expressed uniquely as $x=\sum\limits_{i=1}^n\alpha_ih_i$, for some $h_i\in H$ and $\alpha_i\in\K$. We define $p$-norm ($p\geq1$) on $\X$ as follows :\\
	\centerline{ $\|x\|_p:=\left(\sum\limits_{i=1}^n|\alpha_i|^p\right)^{\frac{1}{p}}$, if $x$ has the representation as above.}\\
	We show below that this gives non-equivalent norms for different $p$'s.

	We show that for $p>q$, $C_{\|\cdot\|_q}(\|\cdot\|_p)=0$. For this we construct a sequence $\{x_n\}$ in $\X$ by $x_n:=ne_1+ne_2+\cdots+ne_n,\forall\,n\in\N$. Then $\|x_n\|_p=n^{1+\frac{1}{p}}$ and $\|x_n\|_q=n^{1+\frac{1}{q}}$. Therefore $C_{\|\cdot\|_q}(\|\cdot\|_p)=\inf\limits_{\substack{x\in\X\\ x\neq0}}\frac{\|x\|_p}{\|x\|_q}\leq\inf\limits_{n\in\N}\frac{\|x_n\|_p}{\|x_n\|_q}=\inf\limits_{n\in\N}\frac{n^{1+\frac{1}{p}}}{n^{1+\frac{1}{q}}}=\lim\limits_{n\to\infty}n^{\frac{1}{p}-\frac{1}{q}}=0\ \big[\because\frac{1}{p}-\frac{1}{q}<0\big]$. Then the theorem follows from Theorem \ref{t:NASC}.
\end{proof}

Let $\mathcal{N_*}(\X):=\norm\smallsetminus\{O\}$. Define a map $\Psi:\mathcal{N_*}(\X)\times\mathcal{N_*}(\X)\to[0,\infty)$ by\\
\centerline{$\Psi(f,g):=\min\big\{C_f(g),C_g(f)\big\},\forall\,f,g\in\mathcal{N_*}(\X)$.}

Then we have the following characterisation of non-equivalent norms.

\begin{Th}
	Two norms $f,g$ on a linear space $\X$ over the field $\K$ are non-equivalent if and only if $\Psi(f,g)=0$.
\end{Th}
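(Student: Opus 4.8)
The plan is to reduce this statement directly to Theorem \ref{t:NASC}, exploiting only the nonnegativity of the comparing functions. First I would recall that for any nonzero $f,g\in\norm$ we have $C_f(g),C_g(f)\in[0,\infty)$, since each comparing function is defined as a supremum of values $|\lambda|$ over the comparing spectrum, which always contains $0$. The entire content of the theorem then rests on a single elementary observation about nonnegative reals: for $a,b\geq0$, the minimum $\min\{a,b\}$ vanishes if and only if the product $ab$ vanishes, both conditions being equivalent to the assertion that at least one of $a,b$ equals $0$.

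Applying this observation with $a=C_f(g)$ and $b=C_g(f)$ yields the key equivalence
\[
\Psi(f,g)=\min\big\{C_f(g),C_g(f)\big\}=0 \iff C_f(g)\,C_g(f)=0.
\]
With this in hand, the theorem follows at once from the contrapositive of Theorem \ref{t:NASC}. That theorem states that $f,g$ are equivalent if and only if $C_f(g)\,C_g(f)\neq0$; negating both sides, $f,g$ are non-equivalent if and only if $C_f(g)\,C_g(f)=0$, which by the displayed equivalence holds precisely when $\Psi(f,g)=0$.

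I anticipate no genuine obstacle, as the statement is essentially a repackaging of Theorem \ref{t:NASC} with the $\min$ operation substituted for the product. The only point demanding a moment's attention is verifying that the comparing functions are honestly nonnegative, so that the passage between $\min\{a,b\}=0$ and $ab=0$ is legitimate; this is immediate from the definition of $C_x$. Once that is noted, the proof is a short chain of equivalences invoking Theorem \ref{t:NASC}.
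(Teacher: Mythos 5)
Your proof is correct and takes exactly the paper's route: the paper's own proof is the one-line remark that the statement ``follows from Theorem \ref{t:NASC}'', and your argument simply makes explicit the elementary fact (valid since $C_f(g),C_g(f)\in[0,\infty)$) that for nonnegative reals the minimum vanishes iff the product does. Nothing is missing; yours is just the fully spelled-out version of the same reduction.
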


\begin{proof}
	This follows from the Theorem \ref{t:NASC}.
\end{proof}

\noindent\textbf{\underline{Acknowledgement} :} The first author is thankful to University Grants Commission, India for financial assistance.


\begin{thebibliography}{99}

\bibitem{Mi} E. Michael; {\em Topologies on spaces of subsets}; Trans.Amer.Math.Soc.
		71(1951),152-182.
		
		
\bibitem{Nach} Leopoldo Nachbin ; {\em Topology And Order};
		D.Van Nostrand Company, Inc. (1965)
		
\bibitem{C(X)} S. Ganguly, S. Mitra, S. Jana; {\em An Associated Structure Of A Topological Vector Space}; Bull. Cal. Math. Soc; Vol-96, No.6 (2004), 489-498.
		
\bibitem{mor} S. Ganguly \& S. Mitra; {\em More on topological quasi-vector space};
		Revista de la Academia Canaria de Ciencias; Vol.22 No.1-2
		(2010),45-58.
		
\bibitem{qvs}S. Ganguly, S. Mitra; {\em A note on topological quasi-vector space}; Revista de la Academia Canaria de Ciencias; XXIII, No. 1-2 (2011), 9-25.
		
\bibitem{spri} S. Jana, J. Saha; {\em A Study of Topological quasi-vector Spaces};
		Revista de la Academia Canaria de Ciencias; XXIV, No.1 (2012), 7-23.
		
\bibitem{bal}Jayeeta Saha \& Sandip Jana; {\em A Study of Balanced Quasi-Vector Space}; Rev. Acad. Canar. Cienc., XXVII, (2015) 8-28.
		
\bibitem{JTh}Jayeeta Saha; {\em A systematic study of topological quasi-vector space}; Thesis, awarded
		Ph.D. degree of University of Calcutta (2014).
		
\bibitem{evs} Priti Sharma and Sandip Jana; {\em An algebraic ordered extension of vector space}; Transactions Of A. Razmadze Mathematical Institute; Elsevier 172 (2018), 545-558.
		
		
\bibitem{Pri} Priti Sharma; \textit{Study of topological exponential vector space}; Thesis, awarded Ph.D. degree of University of Calcutta (2020). 
		
\bibitem{Krey} Erwin Kreyszig; {\em Introductory Functional Analysis with Applications}; John Wiley \& Sons Inc, 1978.

\end{thebibliography}
\end{document}